\theoremstyle{plain}
\newtheorem{theorem}{Theorem}
\newtheorem{lemma}[theorem]{Lemma}
\theoremstyle{definition}
\newtheorem{definition}[theorem]{Definition}
\theoremstyle{remark}
\newtheorem{remark}[theorem]{Remark}
\title{Equivariant structure constants for Hamiltonian-$T$-spaces}
\author{Ho-Hon Leung}
\begin{document}
\begin{abstract}
If there exists a set of canonical classes on a compact Hamiltonian-$T$-space in the sense of R. Goldin and S. Tolman, we derive some formulas for certain equivariant structure constants in terms of other equivariant structure constants and the values of canonical classes restricted to some fixed points. These formulas can be regarded as a generalization of Tymoczko's results.
\end{abstract}

\maketitle

\keywords{Symplectic geometry, Moment map, Equivariant cohomology, Equivariant structure constant}

\section{Introduction} \label{section1}
Let $T$ be a compact torus with its Lie algebra $\mathfrak{t}$ and lattice $\mathfrak{l} \subset\mathfrak{t}$. For a compact symplectic manifold $(M,\omega)$ equipped with a Hamiltonian-$T$-action, we have a \emph{moment map} $\phi\colon M\rightarrow \mathfrak{t}^\ast$, where $\mathfrak{t}^\ast$ is the dual of $\mathfrak{t}$. Then we have the following equation: \begin{equation*} \iota_{X_\xi}\omega=-d\phi^\xi,\mbox{       }\forall\xi\in\mathfrak{t}\end{equation*} where $X_\xi$ denotes the vector field on $M$ generated by the action and $\phi^\xi \colon M\to\mathbb{R}$ is defined by $\phi^\xi (x)=\langle\phi (x),\xi\rangle$. Here, $\langle .,.\rangle$ is the natural pairing of $\mathfrak{t}^\ast$ and $\mathfrak{t}$. M is called a \emph{compact Hamiltonian-$T$-space}.

$\phi^\xi$ is called \emph{the component} of the moment map $\phi$ corresponding to the chosen element $\xi\in\mathfrak{t}$. Suppose that the component of the moment map is \emph{generic}, that is, $\langle\eta,\xi\rangle\neq 0$ for each weight $\eta\in \mathfrak{l}^\ast \subset\mathfrak{t}^\ast$ in the symplectic representation $T_p M$ for every $p$ in the $T$-fixed point set $M^T$, then $\psi=\phi^\xi \colon M\to\mathbb{R}$ is a Morse function with the critical set $M^T$. Under this situation, the Morse index of $\psi$ at each $p\in M^T$ is even. Let $\lambda(p)$ be half of the index of $\psi$ at $p$. Let $\wedge_p^{-}$ be the product of all the individual weights of this representation. 

For each $p\in M^T$, the natural inclusion map $i_p\colon p \to M$ induces a map $i_p^\ast\colon H_T^\ast(M)\to H_T^\ast(p)$ in equivariant cohomology. Let $\alpha\in H_T^\ast(M)$, we use the notation $\alpha(p)$ for the image of $\alpha$ under the map $i_p^\ast$. $i_p^\ast$ is called \emph{the localization at p}. 

\begin{definition} [\cite{GoldinTolman}]
Let M be a compact Hamiltonian-$T$-space with the moment map $\phi\colon M\to\mathfrak{t}^\ast$ and let $\psi=\phi^\xi \colon M\to\mathbb{R}$ be a generic component of the moment map for some $
\xi\in\mathfrak{t}$, a cohomology class $\alpha_p \in H_T^{2\lambda(p)}(M;\mathbb{Q})$ is a canonical class at the fixed point $p$ with respect to $\psi$ if 
\begin{enumerate}
\item $\alpha_p(p)=\wedge_p^{-}$.
\item $\alpha_p(q)=0$ for all $q\in M^T \backslash\{p\}$ such that $\lambda(q)\leq\lambda(p)$.
\end{enumerate}
\end{definition} 

Canonical classes do not always exist, see Example 2.2 in \cite{GoldinTolman}. But if canonical classes exist for all $p\in M^T$, then $\{\alpha_p\}_{p\in M^T}$ form a basis of $H_T^\ast(M)$ as a module over $H_T^\ast(pt)\cong H^\ast(BT)$. 

Suppose that a set of canonical classes exists, the \emph{equivariant structure constants} for $H_T^\ast(M)$ are the elements $c_{pq}^r \in H_T^\ast(pt)$ given by the equation \begin{equation}\alpha_p \alpha_q =\sum_{r\in M^T} c_{pq}^r \alpha_r.\end{equation}

In \cite{Tymoczko}, explicit formulas for the equivariant structure constants of $H_T^\ast(\mathbb{CP}^n)$ are computed in terms of the localizations of canonical classes at various fixed points in $(\mathbb{CP}^n)^T$. This paper is concerned with the generalization of these formulas to compact Hamiltonian-$T$-spaces, under the assumption that the set of canonical classes exists. 

Given a directed graph with vertex set $V$ and edge set $E\subset V\times V$, a path from a vertex $p$ to a vertex $q$ is a $(k+1)$-tuple $r=(r_0,...,r_k)\in V^{k+1}$ so that $r_0=p,r_k=q$ and $(r_{i-1},r_i)\in E$ for all $1\leq i\leq k$. 

\begin{definition} [\cite{GoldinTolman}] \label{directedgraph}
Define an oriented graph with the vertex set $V=M^T$ and the edge set 
\begin{equation*}
E=\{(r,r')\in M^T\times M^T\mid\lambda(r')-\lambda(r)=1,\alpha_r(r')\neq 0\}.
\end{equation*}
Let $\sum_p^q$ be the set of paths from $p$ to $q$ in $(V,E)$.
\end{definition}

From now on, we call this graph a \emph{moment graph}. Note that if no path connects $p$ and $q$, i.e. $\sum_p^q$ is an empty set, then $\alpha_p(q)=0$. 

The realization of the image of a moment map as a graph has been known for a while, see \cite{GulleminZara} for example. Some useful information about the equivariant cohomology of a Hamiltonian-$T$-space can be extracted by using such a graph. 

A formula for $\alpha_p (q)$ in terms of the values of a moment map at the points in $M^T$ and the restriction of canonical classes to points of index exactly two higher was derived in \cite{GoldinTolman}. Based on their idea, more formulas are derived in \cite{Zara}. The goal of this paper is totally different: Formulas for some equivariant structure constants are written in terms of other equivariant structure constants and the restriction of canonical classes to the $T$-fixed point set $M^T$. These are the main results in Section \ref{section2}. The complexity of computations involved in our formulas depends heavily on the structure of the moment graph. In some special cases, if the structure of the moment graph is exceptionally simple, our formulas are greatly simplified. We will look at an example in Section \ref{section3}. Note that we don't make use of any extra assumption on the Hamiltonian-$T$-spaces except the existence of a set of canonical classes.

\section{Main Results} \label{section2}

Let $(M,\omega)$ be a compact Hamiltonian-$T$-space and $\psi=\phi^\xi$ be a generic component of the moment map. Assume that a set of canonical classes $\alpha_p\in H_T^{2\lambda(p)}(M;\mathbb{Q})$ exists for all $p\in M^T$, define an oriented graph $(V,E)$ as in Definition \ref{directedgraph}. In this section we compute the equivariant structure constants $c_{pq}^k$ for any $p,q$ in the vertex set of the moment graph. We do the computations following the values of $\lambda(k)$ in asecending order.

\begin{lemma} \label{lemma1}

\begin{equation*}
c_{pq}^k=0
\end{equation*}for $\lambda(k)<\lambda(p)\leq\lambda(q)$, where $p,q,k\in M^T$. 
\end{lemma}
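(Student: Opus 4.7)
The plan is to apply localization at $k$ to the defining relation and proceed by induction on $\lambda(k)$. Specifically, I would localize the equation $\alpha_p \alpha_q = \sum_{r \in M^T} c_{pq}^r \alpha_r$ at $k$ to obtain
\begin{equation*}
\alpha_p(k)\alpha_q(k) = \sum_{r \in M^T} c_{pq}^r \alpha_r(k),
\end{equation*}
and then analyze both sides separately.

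First, I would note that the left-hand side vanishes: because $\lambda(k) < \lambda(p)$ forces $k \neq p$ and $\lambda(k) \leq \lambda(p)$, property (2) of the canonical class $\alpha_p$ yields $\alpha_p(k) = 0$. Next, I would decompose the sum on the right-hand side into three groups according to $\lambda(r)$. The single term with $r = k$ contributes $c_{pq}^k \wedge_k^{-}$ by property (1). The terms with $r \neq k$ and $\lambda(r) \geq \lambda(k)$ vanish because property (2) applied to $\alpha_r$ gives $\alpha_r(k) = 0$. For the remaining terms, with $\lambda(r) < \lambda(k)$, the chain $\lambda(r) < \lambda(k) < \lambda(p) \leq \lambda(q)$ places $r$ in exactly the range where the inductive hypothesis applies, forcing $c_{pq}^r = 0$.

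Combining these observations, the localized equation collapses to $0 = c_{pq}^k \wedge_k^{-}$. Since $\wedge_k^{-}$ is a nonzero element of the polynomial ring $H_T^*(pt)$ (being the product of the nonzero weights of $T_k M$ that pair negatively with $\xi$, with genericity of $\phi^\xi$ guaranteeing none of them vanish), I conclude $c_{pq}^k = 0$. The base case $\lambda(k) = 0$ is handled identically, since the third group of terms above is then empty and no inductive input is needed.

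The argument is a direct unwinding of the definition of a canonical class, so I don't foresee a real obstacle; the only point requiring a moment of care is organizing the induction on $\lambda(k)$ so that by the time we process $k$, all coefficients $c_{pq}^r$ with $\lambda(r) < \lambda(k)$ appearing on the right-hand side are already known to vanish. This is precisely the ``ascending order'' processing flagged in the paragraph preceding the lemma.
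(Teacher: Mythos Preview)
Your argument is correct and follows essentially the same approach as the paper: both proceed by induction on $\lambda(k)$, localize the defining relation at the fixed point under consideration, and use the defining properties of canonical classes to reduce the right-hand side to the single term $c_{pq}^k\,\alpha_k(k)$. Your presentation is in fact somewhat cleaner, since you localize directly at the general $k$ and make the inductive structure explicit, whereas the paper narrates the first two steps (at the fixed points of smallest and second-smallest index) before invoking induction.
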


\begin{proof}
We begin by writing the equation
\begin{equation}\label{star}
\alpha_p \alpha_q =\sum_{r\in M^T} c_{pq}^r \alpha_r.
\end{equation}
Without loss of generality, we assume that $\lambda(p)\leq\lambda(q)$. Let $t$ be an element in $M^T$ such that $\lambda(t)$ is the minimum value in the set $S=\{\lambda(x)\mid x\in M^T\}$. Since $\lambda(t)<\lambda(q)$, we have $\alpha_q (t)=0$. Localizing (\ref{star}) at $t$ gives
\begin{equation} \label{equation1}
\sum_{r\in M^T} c_{pq}^r \alpha_r(t)=0.
\end{equation}
Since $\alpha_r (t)=0,\forall r\in M^T\backslash\{t\}$, (\ref{equation1}) implies
\begin{equation*} 
c_{pq}^t \alpha_t(t)=0.
\end{equation*}
But $\alpha_t(t)\neq 0$, thus we get 
\begin{equation} \label{equationt}
c_{pq}^t=0.
\end{equation}

In the set $S=\{\lambda(x)\mid x\in M^T\}$, pick $u\in M^T\backslash\{q\}$ such that $\lambda(u)\leq\lambda(q)$ and $\lambda(u)$ attains the minimum value in the set $S\backslash\{\lambda(t)\}$, where $t\in M^T$ still satisfies the same property as above that $\lambda(t)=\mbox{min}_{x\in M^T}\lambda(x)$. Then $\alpha_q(u)=0$ and hence localizing (\ref{star}) at $u$ gives
\begin{equation} \label{equation2}
\sum_{r\in M^T} c_{pq}^r \alpha_r(u)=0.
\end{equation}
But we know that $\alpha_r(u)=0$ when $u\neq r$ and $\lambda(u)\leq\lambda(r)$.
Also, $c_{pq}^t=0$ by (\ref{equationt}). Hence (\ref{equation2}) gives
\begin{equation*}
c_{pq}^u \alpha_u(u)=0.
\end{equation*}
Since $\alpha_u(u)\neq 0$, we have 
\begin{equation}
c_{pq}^u =0.
\end{equation}

By using the same method inductively on the set of values in $S$ which are smaller than $\lambda(p)$, we conclude that 
\begin{equation}
c_{pq}^k =0
\end{equation}
for all $k\in M^T$ such that $\lambda(k)<\lambda(p)\leq\lambda(q)$.
\end{proof}

\begin{lemma} \label{lemma2}
\begin{equation*}
c_{pq}^p=0
\end{equation*}for $\lambda(p)\leq\lambda(q)$, where $p,q\in M^T$.
\end{lemma}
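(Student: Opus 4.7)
The plan is to carry out essentially the same localization argument as in Lemma~\ref{lemma1}, but now localize the defining relation (\ref{star}) at the point $p$ itself. Writing out the localization gives
\begin{equation*}
\alpha_p(p)\,\alpha_q(p) \;=\; \sum_{r\in M^T} c_{pq}^r\,\alpha_r(p).
\end{equation*}
The left-hand side vanishes: since $\lambda(p)\leq\lambda(q)$ and $p\neq q$, the defining property of the canonical class $\alpha_q$ forces $\alpha_q(p)=0$.

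Next I would simplify the right-hand side using the two vanishing mechanisms already in play. Terms with $\lambda(r)<\lambda(p)$ drop out by Lemma~\ref{lemma1}. Among the remaining terms with $\lambda(r)\geq\lambda(p)$, the canonical-class axiom gives $\alpha_r(p)=0$ whenever $r\neq p$ (because then $\lambda(p)\leq\lambda(r)$ and $p$ is a fixed point distinct from $r$). Hence only the $r=p$ term survives, and the equation reduces to
\begin{equation*}
c_{pq}^p\,\alpha_p(p) \;=\; 0.
\end{equation*}

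Finally, since $\alpha_p(p)=\wedge_p^-\neq 0$ by the genericity of $\psi$, I can divide through to conclude $c_{pq}^p=0$. There is no real obstacle here: this is a one-step extension of the inductive scheme used for Lemma~\ref{lemma1}, pushing the vanishing of $c_{pq}^k$ from the strict range $\lambda(k)<\lambda(p)$ up to the borderline index $\lambda(k)=\lambda(p)$ at the specific vertex $k=p$. The only point to be careful about is the implicit assumption $p\neq q$, which is needed so that $\alpha_q(p)=0$ on the left-hand side.
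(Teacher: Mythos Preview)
Your proof is correct and follows essentially the same localization argument as the paper: restrict (\ref{star}) to $p$, kill low-index terms via Lemma~\ref{lemma1}, kill high-index terms via the canonical-class axiom, and divide by $\alpha_p(p)\neq 0$. Your explicit remark that one must assume $p\neq q$ (so that $\alpha_q(p)=0$) is a point the paper leaves implicit; indeed Lemma~\ref{lemma4} gives $c_{pp}^p=\alpha_p(p)\neq 0$, so the hypothesis $p\neq q$ is genuinely needed here.
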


\begin{proof}
Note that $\alpha_q(p)=0$. Localizing (\ref{star}) at $p$ gives
\begin{equation}\label{equationp}
\sum_{r\in M^T} c_{pq}^r \alpha_r(p)=0.
\end{equation}For all $k\in M^T\backslash\{p\}$ such that $\lambda(k)<\lambda(p)$, $c_{pq}^k=0$ by Lemma \ref{lemma1}. For all $k'\in M^T\backslash\{p\}$ such that $\lambda(k')\geq\lambda(p)$, $\alpha_{k'}(p)=0$. Hence by (\ref{equationp}), 
\begin{equation*}
c_{pq}^p \alpha_p(p)=0.
\end{equation*}Since $\alpha_p(p)\neq 0$, we have
\begin{equation}
c_{pq}^p=0.
\end{equation}
\end{proof}

\begin{lemma} \label{lemma3}
\begin{equation*}
c_{pq}^k=0
\end{equation*}
for $k\in M^T\backslash\{p,q\}$ such that $\lambda(p)\leq\lambda(k)\leq\lambda(q)$.
\end{lemma}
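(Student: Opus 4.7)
The proof should follow the same localization-plus-induction pattern used in Lemmas \ref{lemma1} and \ref{lemma2}. The plan is to induct on the value $\lambda(k)$, sweeping upward from $\lambda(p)$, and at each step localize equation (\ref{star}) at $k$ to kill every term except $c_{pq}^k \alpha_k(k)$.

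Fix $k \in M^T \setminus \{p,q\}$ with $\lambda(p) \leq \lambda(k) \leq \lambda(q)$. Localizing (\ref{star}) at $k$ gives
\begin{equation*}
\alpha_p(k)\alpha_q(k) = \sum_{r\in M^T} c_{pq}^r \alpha_r(k).
\end{equation*}
Since $k \neq q$ and $\lambda(k) \leq \lambda(q)$, the defining property of canonical classes forces $\alpha_q(k)=0$, so the left-hand side vanishes. On the right-hand side, $\alpha_r(k)=0$ whenever $r \neq k$ and $\lambda(r) \geq \lambda(k)$, so the only potentially surviving terms correspond to $r=k$ or $\lambda(r)<\lambda(k)$.

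I would now dispose of the terms with $\lambda(r)<\lambda(k)$ by a three-way split: if $\lambda(r)<\lambda(p)$ then $c_{pq}^r=0$ by Lemma \ref{lemma1}; if $r=p$ then $c_{pq}^r=0$ by Lemma \ref{lemma2}; and if $r \in M^T \setminus \{p\}$ with $\lambda(p)\leq \lambda(r) < \lambda(k)$ then $r \neq q$ (since $\lambda(r) < \lambda(k) \leq \lambda(q)$) so the inductive hypothesis of the present lemma gives $c_{pq}^r=0$. The base case $\lambda(k)=\lambda(p)$ is automatic because the last bucket is then empty. Thus the localized equation reduces to $c_{pq}^k \alpha_k(k)=0$, and since $\alpha_k(k)=\wedge_k^{-}\neq 0$, we conclude $c_{pq}^k=0$.

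There is no real obstacle here, because the same bookkeeping that drove Lemmas \ref{lemma1} and \ref{lemma2} closes the argument; the only subtle point to be careful about is making sure the induction on $\lambda(k)$ is well-founded, which it is since $\lambda$ takes only finitely many nonnegative integer values on $M^T$, and that the case-split on $\lambda(r)<\lambda(k)$ genuinely covers every index whose $c_{pq}^r \alpha_r(k)$ could contribute.
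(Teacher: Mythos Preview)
Your proof is correct and follows essentially the same approach as the paper: localize (\ref{star}) at $k$, use $\alpha_q(k)=0$ to annihilate the left-hand side, and then induct on $\lambda(k)$ upward from $\lambda(p)$ to clear the remaining terms. The paper treats only the base case $\lambda(k)=\lambda(p)$ explicitly and then writes ``by using the same localization method inductively''; your version is simply a more carefully spelled-out account of that induction, including the explicit invocation of Lemma~\ref{lemma2} to dispose of the $r=p$ term in the inductive step.
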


\begin{proof}
Note that $\alpha_q(k)=0$. Localizing (\ref{star}) at $k$ gives
\begin{equation} \label{equation10}
\sum_{r\in M^T} c_{pq}^r \alpha_r(k)=0.
\end{equation}
If $\lambda(p)=\lambda(k)$, then $c_{pq}^u=0$ for $u\in M^T$ such that $\lambda(u)<\lambda(p)=\lambda(k)$ by Lemma $\ref{lemma1}$. And $\alpha_{k'}(k)=0$ for $k' \in M^T\backslash\{k\}$ such that $\lambda(k')\geq\lambda(k)$. (\ref{equation10}) becomes 
\begin{equation}
c_{pq}^k \alpha_k(k)=0.
\end{equation}Since $\alpha_k (k)\neq 0$, we get $c_{pq}^k$=0. By using the same localization method inductively on the set $S'\subset S=\{\lambda(x)\mid x\in M^T\}$ that contains all values between $\lambda(p)$ and $\lambda(q)$, we get the result.
\end{proof}

\begin{lemma} \label{lemma4}
\begin{equation*}
c_{pq}^q=\alpha_p(q)
\end{equation*}for $\lambda(p)\leq\lambda(q)$, where $p,q\in M^T$.
\end{lemma}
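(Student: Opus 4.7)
The plan is to localize the defining relation (\ref{star}) at the point $q$ itself and use the previous three lemmas to kill every term on the right-hand side except the one indexed by $r=q$.

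First, I would write the left-hand side of (\ref{star}) evaluated at $q$ as $\alpha_p(q)\alpha_q(q)$, and the right-hand side as $\sum_{r\in M^T} c_{pq}^r\alpha_r(q)$. Next, I would split the sum on the right into three ranges according to $\lambda(r)$. For $r$ with $\lambda(r)<\lambda(p)$, Lemma \ref{lemma1} forces $c_{pq}^r=0$. For $r=p$, Lemma \ref{lemma2} gives $c_{pq}^p=0$. For $r\in M^T\setminus\{p,q\}$ with $\lambda(p)\le\lambda(r)\le\lambda(q)$, Lemma \ref{lemma3} yields $c_{pq}^r=0$. Finally, for $r\ne q$ with $\lambda(r)>\lambda(q)$, we invoke the defining property of canonical classes: since $\lambda(q)<\lambda(r)$ and $r\ne q$, we have $\alpha_r(q)=0$, so those terms drop out regardless of the value of $c_{pq}^r$.

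After this pruning, the localized equation reduces to
\begin{equation*}
\alpha_p(q)\alpha_q(q)=c_{pq}^q\alpha_q(q).
\end{equation*}
Since $\alpha_q(q)=\wedge_q^{-}\ne 0$, we may cancel it (working in the fraction field of $H_T^\ast(pt)$ if needed, but the equality is an identity in $H_T^\ast(pt)$ since $\wedge_q^{-}$ is a non-zero-divisor in the polynomial ring), obtaining $c_{pq}^q=\alpha_p(q)$.

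There is no real obstacle here: the only point that needs care is making sure that the three cases from Lemmas \ref{lemma1}--\ref{lemma3}, together with the vanishing $\alpha_r(q)=0$ for $r\ne q$ with $\lambda(r)\ge\lambda(q)$, cover every index $r\in M^T\setminus\{q\}$. This is clear because every such $r$ satisfies either $\lambda(r)<\lambda(p)$, or $\lambda(p)\le\lambda(r)\le\lambda(q)$ (with $r=p$ or $r\ne p,q$), or $\lambda(r)>\lambda(q)$.
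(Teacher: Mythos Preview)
Your proof is correct and follows essentially the same approach as the paper: localize (\ref{star}) at $q$, use Lemmas \ref{lemma1}--\ref{lemma3} to eliminate all $c_{pq}^r$ with $r\neq q$ and $\lambda(r)\leq\lambda(q)$, use the canonical class property to eliminate the remaining terms via $\alpha_r(q)=0$, and cancel the nonzero factor $\alpha_q(q)$. The only cosmetic difference is that you spell out the case split among Lemmas \ref{lemma1}--\ref{lemma3} explicitly, whereas the paper bundles them into a single sentence.
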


\begin{proof}
Localizing (\ref{star}) at $q$ gives 
\begin{equation}\label{equation11}
\alpha_p (q)\alpha_q(q)=\sum_{r\in M^T} c_{pq}^r \alpha_r(q).
\end{equation}By Lemma \ref{lemma1}, \ref{lemma2} and \ref{lemma3}, $c_{pq}^k=0$ for all $k\in M^T\backslash\{q\}$ such that $\lambda(k)\leq\lambda(q)$. And $\alpha_{k'}(q)=0$ for all $k'\in M^T$ such that $\lambda(k')>\lambda(q)$. Hence by (\ref{equation11}),
\begin{equation}
\alpha_p(q)\alpha_q(q)=c_{pq}^q \alpha_q(q).
\end{equation}Then we divide both sides by $\alpha_q(q)$, which is non-zero, to get the desired result.
\end{proof}

Next, we will consider the equivariant structure constants $c_{pq}^z$ such that $\lambda(z)=1+\lambda(q)$. 

\begin{theorem} \label{theoremz}
By the same notations and assumptions as in Lemma \ref{lemma1}, 
\begin{equation*}
c_{pq}^z=\frac{\alpha_q(z)}{\alpha_z(z)}(\alpha_p(z)-\alpha_p(q))
\end{equation*}where $\lambda(z)=1+\lambda(q)$.
\end{theorem}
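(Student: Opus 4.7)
The plan is to mimic the localization strategy used in Lemmas \ref{lemma1}--\ref{lemma4}, this time applied to the fixed point $z$. Starting from
\begin{equation*}
\alpha_p\alpha_q = \sum_{r\in M^T} c_{pq}^r \alpha_r,
\end{equation*}
I would localize at $z$ to obtain $\alpha_p(z)\alpha_q(z) = \sum_{r\in M^T} c_{pq}^r \alpha_r(z)$, and then show that all but two terms on the right-hand side vanish.

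To carry this out, I would split the sum according to how $\lambda(r)$ compares to $\lambda(q)$ and $\lambda(z)=\lambda(q)+1$. For $r$ with $\lambda(r)\leq\lambda(q)$ and $r\neq q$, Lemmas \ref{lemma1}, \ref{lemma2}, and \ref{lemma3} together imply $c_{pq}^r=0$, so these terms drop out. The only surviving term in this range is $r=q$, and by Lemma \ref{lemma4} its coefficient is $c_{pq}^q=\alpha_p(q)$. For $r$ with $\lambda(r)\geq\lambda(z)$ and $r\neq z$, the defining property of the canonical class $\alpha_r$ forces $\alpha_r(z)=0$, since $\lambda(z)\leq\lambda(r)$. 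Thus only $r=q$ and $r=z$ contribute to the localization.

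After this cleanup, the localized equation reduces to
\begin{equation*}
\alpha_p(z)\alpha_q(z) = \alpha_p(q)\alpha_q(z) + c_{pq}^z\,\alpha_z(z).
\end{equation*}
Solving for $c_{pq}^z$ and dividing by $\alpha_z(z)=\wedge_z^{-}\neq 0$ yields the claimed formula.

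There is no real obstacle here beyond careful bookkeeping; the main point is that the assumption $\lambda(z)=\lambda(q)+1$ is exactly what is needed so that every other fixed point either has index $\leq\lambda(q)$ (handled by the earlier lemmas) or has index $\geq\lambda(z)$ with $r\neq z$ (handled by property (2) of canonical classes). If I were extending this to $\lambda(z)>\lambda(q)+1$, this dichotomy would fail, and intermediate-index fixed points would contribute nontrivial correction terms, which is presumably where the more intricate main results of Section \ref{section2} come in.
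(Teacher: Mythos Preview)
Your proposal is correct and follows essentially the same approach as the paper's own proof: localize the defining equation at $z$, use Lemmas \ref{lemma1}--\ref{lemma3} to kill the terms with $\lambda(r)\leq\lambda(q)$, $r\neq q$, use property (2) of canonical classes to kill the terms with $\lambda(r)\geq\lambda(z)$, $r\neq z$, substitute $c_{pq}^q=\alpha_p(q)$ from Lemma \ref{lemma4}, and solve. Your closing remark about why the dichotomy requires $\lambda(z)=\lambda(q)+1$ is a nice anticipation of Theorems \ref{theoremy} and \ref{theoremx}.
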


\begin{proof}
Let $z\in M^T$ such that $\lambda(z)=1+\lambda(q)$. Localizing (\ref{star}) at $z$ gives
\begin{eqnarray}
\alpha_p(z)\alpha_q(z)&=&\sum_{r\in M^T} c_{pq}^r \alpha_r(z)\nonumber\\
&=&c_{pq}^q \alpha_q(z)+c_{pq}^z \alpha_z(z).\label{equationz}
\end{eqnarray}The second equality holds because $c_{pq}^k=0$ for $k\in M^T\backslash\{q\}$ such that $\lambda(k)\leq\lambda(q)$ by Lemma \ref{lemma1}, \ref{lemma2} and \ref{lemma3}. Also, $\alpha_{k'}(z)=0$ for all $k'\in M^T\backslash\{z\}$ such that $\lambda(k')\geq\lambda(z)$. By (\ref{equationz}), 
\begin{equation}\label{equationz2}
c_{pq}^z=\frac{\alpha_p(z)\alpha_q(z)-c_{pq}^q \alpha_q(z)}{\alpha_z(z)}=\frac{\alpha_p(z)\alpha_q(z)-\alpha_p(q) \alpha_q(z)}{\alpha_z(z)}=\frac{\alpha_q(z)}{\alpha_z(z)}(\alpha_p(z)-\alpha_p(q)).
\end{equation}
\end{proof}

\begin{remark} \label{remark1}
We note that if $(q,z)\notin E$, which means that there is no edge connecting $q$ and $z$ in the moment graph, then $\alpha_q(z)=0$ and hence $c_{pq}^z=0$ by (\ref{equationz2}). 
\end{remark}

We will then consider the equivariant structure constants $c_{pq}^y$ such that $\lambda(y)=2+\lambda(q)$.

\begin{definition}
In the directed graph defined in Definition \ref{directedgraph}, define the \emph{negative valency}, $V_p^{-}$, at $p\in V$ by
\begin{equation*}
V_p^{-} =\{v\in V\mid (v,p)\in E\}.
\end{equation*}
Define the positive valency, $V_p^{+}$, at $p\in V$ by
\begin{equation*}
V_p^{+}=\{v\in V\mid (p,v)\in E\}
\end{equation*}
and let $| V_p|$ be the number of elements in $V_p$.
\end{definition}

\begin{definition} \label{shifting}
Let the rank of the torus $T$ be $n$. Let $I\subset\mathbb{Q}[t_0, t_1,..., t_n]$ denote the subring generated by $\alpha_p(q)$ for all $p,q\in M^T$. Define a \emph{shifting operator} $\mathfrak{s}_a^b\colon I\rightarrow I$ by
\begin{equation*}
\mathfrak{s}_a^b(\alpha_p(a))=\alpha_p(b)
\end{equation*}for any $p\in M^T$.
\end{definition}

Note that the definition of $\mathfrak{s}_a^b$ can be extended to the ring of fractions of $I$. Now we are in the right place to state our next result.

\begin{theorem} \label{theoremy}
By the same notations and assumptions as in Lemma \ref{lemma1},
\begin{equation*} 
c_{pq}^y=\sum_{i=1}^{|V_y^{-}|} \frac{\alpha_{z_i}(y)}{\alpha_y(y)}(\frac{1}{| V_y^{-}|}\mathfrak{s}_{z_i}^y c_{pq}^{z_i}-c_{pq}^{z_i})
\end{equation*}where $y\in M^T$ such that $\lambda(y)=2+\lambda(q)$ and $z_i$ are the elements in $V_y^-$, for $i=1,2,...,|V_y^-|$.
\end{theorem}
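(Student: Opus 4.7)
The plan is to localize the defining relation $\alpha_p\alpha_q = \sum_r c_{pq}^r \alpha_r$ at the fixed point $y$, use the previous lemmas to kill all but finitely many terms on the right, and then repackage the resulting expression via the shifting operator. For each $r \in M^T$ there are four cases depending on $\lambda(r)$: if $\lambda(r) \leq \lambda(q)$ then $c_{pq}^r = 0$ unless $r=q$ (by Lemmas \ref{lemma1}, \ref{lemma2}, \ref{lemma3}), and $c_{pq}^q = \alpha_p(q)$ by Lemma \ref{lemma4}; if $\lambda(r) = \lambda(q)+1$ then $\alpha_r(y) \neq 0$ forces $r \in V_y^-$ by Definition \ref{directedgraph}; if $\lambda(r) = \lambda(y)$ then $\alpha_r(y) = 0$ unless $r=y$; and if $\lambda(r) > \lambda(y)$ then $\alpha_r(y) = 0$. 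Collecting the survivors, the localized equation would read
\begin{equation*}
\alpha_p(y)\alpha_q(y) = \alpha_p(q)\alpha_q(y) + \sum_{z_i \in V_y^-} c_{pq}^{z_i}\alpha_{z_i}(y) + c_{pq}^y\alpha_y(y),
\end{equation*}
which I would solve for $c_{pq}^y\alpha_y(y)$ immediately.

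The second step is the algebraic trick of rewriting $\alpha_q(y)\bigl(\alpha_p(y)-\alpha_p(q)\bigr)$ using the shifting operator. Applying $\mathfrak{s}_{z_i}^y$ to the formula of Theorem \ref{theoremz} gives
\begin{equation*}
\mathfrak{s}_{z_i}^y c_{pq}^{z_i} = \frac{\alpha_q(y)}{\alpha_{z_i}(y)}\bigl(\alpha_p(y)-\alpha_p(q)\bigr),
\end{equation*}
so the product $\alpha_{z_i}(y)\,\mathfrak{s}_{z_i}^y c_{pq}^{z_i}$ equals $\alpha_q(y)(\alpha_p(y)-\alpha_p(q))$ \emph{for every} $i$. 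Averaging this identity over $i = 1,\ldots,|V_y^-|$ therefore rewrites $\alpha_q(y)(\alpha_p(y)-\alpha_p(q))$ as $\tfrac{1}{|V_y^-|}\sum_i \alpha_{z_i}(y)\,\mathfrak{s}_{z_i}^y c_{pq}^{z_i}$. Substituting this into the expression for $c_{pq}^y\alpha_y(y)$ obtained in the first step, merging the two sums over $i$, and dividing by $\alpha_y(y) \neq 0$ would then yield the advertised formula.

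The main obstacle I anticipate is the bookkeeping in the first step: one must patiently verify that every $r$ with $\lambda(r) = \lambda(q)+1$ and $r \notin V_y^-$ contributes zero via $\alpha_r(y) = 0$ (using the edge condition of Definition \ref{directedgraph}) rather than via vanishing of $c_{pq}^r$, since the earlier lemmas do not control $c_{pq}^r$ when $\lambda(r) > \lambda(q)$. Once the localization is reduced to the three surviving groups of terms, the rest is a short algebraic manipulation; the averaging factor $1/|V_y^-|$ is the cosmetic choice that distributes $\alpha_q(y)(\alpha_p(y)-\alpha_p(q))$ symmetrically across the predecessors of $y$ in the moment graph.
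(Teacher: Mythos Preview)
Your proposal is correct and follows essentially the same approach as the paper: localize at $y$, reduce the sum using Lemmas \ref{lemma1}--\ref{lemma4} and the edge condition of Definition \ref{directedgraph}, then use $\mathfrak{s}_{z_i}^y$ applied to Theorem \ref{theoremz} to rewrite $\alpha_q(y)(\alpha_p(y)-\alpha_p(q))$ and average over $i$. The paper phrases the last step as writing one copy of the rearranged identity for each $z_i\in V_y^-$ and summing, but this is the same averaging argument you describe.
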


\begin{proof}
Let $y\in M^T$ such that $\lambda(y)=2+\lambda(q)$. Localizing (\ref{star}) at $y$ gives
\begin{equation}\label{equation16}
\alpha_p(y)\alpha_q(y)=\sum_{r\in M^T}c_{pq}^r \alpha_r(y).
\end{equation}Note that $c_{pq}^k=0$ if $\lambda(k)\leq\lambda(q)$ and $k\neq q$. Also, $\alpha_{k'}(y)=0$ if $\lambda(k')\geq\lambda(y)$ and $k'\neq y$. For $z\in M^T$ such that $\lambda(z)=1+\lambda(q)$ but $z\notin V_y^-$, $\alpha_z(y)=0$. Hence, (\ref{equation16}) is simplified as
\begin{eqnarray}
\alpha_p(y)\alpha_q(y)&=& c_{pq}^q \alpha_q(y)+\sum_{z\in V_y^{-}} c_{pq}^z \alpha_z(y)+c_{pq}^y \alpha_y(y)\nonumber\\
&=& \alpha_p(q) \alpha_q(y)+\sum_{z\in V_y^{-}} c_{pq}^z \alpha_z(y)+c_{pq}^y \alpha_y(y). \label{equation17}
\end{eqnarray}By rearranging terms in (\ref{equation17}), we get
\begin{equation} \label{equation18}
c_{pq}^y =\frac{\alpha_p(y)\alpha_q(y)-\alpha_p(q)\alpha_q(y)}{\alpha_y(y)}-\frac{\sum_{z\in V_y^{-}}c_{pq}^z \alpha_z(y)}{\alpha_y(y)}.
\end{equation}Denote the elements in $V_y^-$ by $z_1, z_2,..., z_{|V_y^-|}$. By Theorem \ref{theoremz},
\begin{equation*}
c_{pq}^{z_i}=\frac{\alpha_q(z_i)}{\alpha_{z_i}(z_i)}(\alpha_p(z_i)-\alpha_p(q))
\end{equation*} for all $z_i$ in $V_y^-$. By the shifting operators defined in Definition \ref{shifting}, we have
\begin{equation} \label{equation19}
\mathfrak{s}_{z_i}^y  c_{pq}^{z_i}=\frac{\alpha_q(y)}{\alpha_{z_i}(y)}(\alpha_p(y)-\alpha_p(q)) .
\end{equation}By (\ref{equation18}), we have
\begin{equation}
c_{pq}^y =\frac{\alpha_{z_i}(y)}{\alpha_y(y)}\mathfrak{s}_{z_i}^y c_{pq}^{z_i} -\frac{\sum_{i=1}^{| V_y^{-}|} c_{pq}^{z_i} \alpha_{z_i}(y)}{\alpha_y(y)}=\frac{\alpha_{z_i}(y)}{\alpha_y(y)}(\mathfrak{s}_{z_i}^y c_{pq}^{z_i} -\sum_{i=1}^{|V_y^-|} c_{pq}^{z_i} )
\end{equation} for each $z_i\in V_y^-$. Adding all these $| V_y^{-}|$ equations together, and then dividing the sum by $| V_y^{-}|$, we have
\begin{equation}  \label{equation21}
c_{pq}^y=\sum_{i=1}^{| V_y^{-}|} \frac{\alpha_{z_i}(y)}{\alpha_y(y)}(\frac{1}{| V_y^{-}|}\mathfrak{s}_{z_i}^y c_{pq}^{z_i}-c_{pq}^{z_i}).
\end{equation}
\end{proof}

\begin{remark} \label{remark2}
If there is no path connecting $q$ and $y$ in $(V,E)$ when $\lambda(y)-\lambda(q)=2$, then $\alpha_q(y)=0$. Under this situation, for $z\in V_y^{-}$, $\alpha_z(y)\neq 0$ but $c_{pq}^z=0$ (see Remark \ref{remark1}) since there does not exist any path connecting $q$ and $z$ in $(V,E)$. Thus, $c_{pq}^z \alpha_z(y)=0$ for all $z\in V_y^-$.  By (\ref{equation18}), we can conclude that $c_{pq}^y =0$ if $\sum_q^y$ is an empty set. 
\end{remark}

Finally, we will consider the equivariant structure constants $c_{pq}^x$ where $\lambda(x)=3+\lambda(q)$.

\begin{theorem} \label{theoremx}
By the same notations and assumptions as in Lemma \ref{lemma1},
\begin{equation*}
c_{pq}^x=\sum_{y\in V_x^-}\frac{\alpha_y(x)}{\alpha_x(x)}(\frac{1}{|V_x^-|}\mathfrak{s}_y^x c_{pq}^y-c_{pq}^y)+\sum_{z\in M^T\backslash (\{q,x\}\cup V_x^-)}\frac{| V_z^+| -| V_x^-|}{|V_x^-|}\frac{\alpha_z(x)}{\alpha_x(x)}c_{pq}^z
\end{equation*}where $x\in M^T$ such that $\lambda(x)=3+\lambda(q)$.
\end{theorem}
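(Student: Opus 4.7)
The plan is to reproduce, one $\lambda$-level up, the strategy of Theorem \ref{theoremy}. First, localize (\ref{star}) at $x$. By Lemmas \ref{lemma1}--\ref{lemma4} the only term with $\lambda(r)\le\lambda(q)$ that survives is $r=q$, contributing $\alpha_p(q)\alpha_q(x)$; the case $r=x$ contributes $c_{pq}^x\alpha_x(x)$; for $\lambda(r)\ge\lambda(x)$ with $r\ne x$ we have $\alpha_r(x)=0$; and for $\lambda(r)=\lambda(q)+2$, $\alpha_r(x)=0$ unless $r\in V_x^-$. Thus localization reduces to
\begin{equation*}
c_{pq}^x\alpha_x(x)=(\alpha_p(x)-\alpha_p(q))\alpha_q(x)-\!\!\sum_{\lambda(z)=\lambda(q)+1}\!\!c_{pq}^z\alpha_z(x)-\sum_{y\in V_x^-}c_{pq}^y\alpha_y(x).
\end{equation*}

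Next, for each $y\in V_x^-$ I would invoke the intermediate identity arising in the proof of Theorem \ref{theoremy} (equation (\ref{equation17}) rearranged),
\begin{equation*}
c_{pq}^y\alpha_y(y)=(\alpha_p(y)-\alpha_p(q))\alpha_q(y)-\sum_{w\in V_y^-}c_{pq}^w\alpha_w(y),
\end{equation*}
and apply the shifting operator $\mathfrak{s}_y^x$. Because Theorem \ref{theoremz} writes each $c_{pq}^w$ (with $\lambda(w)=\lambda(q)+1$) as a rational expression in $\alpha_*(w)$ and $\alpha_p(q)$ with no $\alpha_*(y)$ occurrence, $\mathfrak{s}_y^x$ fixes every such $c_{pq}^w$, and also fixes $\alpha_p(q)$. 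This yields
\begin{equation*}
(\mathfrak{s}_y^x c_{pq}^y)\alpha_y(x)=(\alpha_p(x)-\alpha_p(q))\alpha_q(x)-\sum_{w\in V_y^-}c_{pq}^w\alpha_w(x),
\end{equation*}
giving $|V_x^-|$ distinct representations of the ``top piece'' $(\alpha_p(x)-\alpha_p(q))\alpha_q(x)$.

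Averaging these $|V_x^-|$ identities, substituting the average for $(\alpha_p(x)-\alpha_p(q))\alpha_q(x)$ in the preliminary identity, and dividing by $\alpha_x(x)$, the contribution $\tfrac{1}{|V_x^-|}\sum_{y\in V_x^-}\tfrac{\alpha_y(x)}{\alpha_x(x)}\mathfrak{s}_y^x c_{pq}^y$ combines with $-\sum_{y\in V_x^-}\tfrac{\alpha_y(x)}{\alpha_x(x)}c_{pq}^y$ to produce immediately the first sum of the stated formula. What remains is the $-\sum_z\tfrac{c_{pq}^z\alpha_z(x)}{\alpha_x(x)}$ term from the preliminary identity together with the double sum $\tfrac{1}{|V_x^-|\alpha_x(x)}\sum_{y\in V_x^-}\sum_{w\in V_y^-}c_{pq}^w\alpha_w(x)$ produced by the substitution. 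The main technical step, and the main obstacle, is the reindexing of this double sum: swapping the order of summation turns the coefficient of $c_{pq}^z\alpha_z(x)/\alpha_x(x)$ into $\#\{y\in V_x^-:(z,y)\in E\}/|V_x^-|$, and the crux is identifying this count with $|V_z^+|/|V_x^-|$, i.e., arguing that every $y$ with $(z,y)\in E$ automatically lies in $V_x^-$. Once this is established, combining with the $-1$ from the $-\sum_z$ contribution produces the coefficient $(|V_z^+|-|V_x^-|)/|V_x^-|$. Extending the summation range to $z\in M^T\setminus(\{q,x\}\cup V_x^-)$ is harmless, since the extra summands vanish either by Lemmas \ref{lemma1}--\ref{lemma3} and Remark \ref{remark1} (which force $c_{pq}^z=0$) or by $\alpha_z(x)=0$.
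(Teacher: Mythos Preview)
Your approach is essentially the paper's: localize at $x$, shift the level-two identity (\ref{equation18}) by $\mathfrak{s}_y^x$ for each $y\in V_x^-$, then average and combine. The obstacle you isolate---identifying $\#\{y\in V_x^-:(z,y)\in E\}$ with $|V_z^+|$---is exactly what the paper phrases as its count of ``excessive edges'' leading to (\ref{equation30}), and the paper likewise offers no further justification for that count beyond asserting it, so your proposal matches the published argument in both strategy and level of completeness.
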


\begin{proof}
For $x\in M^T$ such that $\lambda(x)=3+\lambda(q)$, localizing (\ref{star}) at $x$ gives
\begin{equation} \label{equation22}
\alpha_p(x)\alpha_q(x)=\sum_{r\in M^T}c_{pq}^r \alpha_r(x).
\end{equation}Note that $c_{pq}^k=0$ if $\lambda(k)\leq\lambda(q)$ and $k\neq q$. $\alpha_{k'}(x)=0$ if $\lambda(k')\geq\lambda(x)$ and $k' \neq x$. For $y\in M^T$ such that $\lambda(y)=2+\lambda(q)$, the term $c_{pq}^y \alpha_y(x)$ is non-zero only if $y\in V_x^-$. By (\ref{equation22}), we have
\begin{eqnarray}
\alpha_p(x) \alpha_q(x)&=&c_{pq}^q \alpha_q(x)+\sum_{z\in M^T\backslash(\{q,x\}\cup V_x^{-})}c_{pq}^z \alpha_z(x) +\sum_{y\in V_x^{-}} c_{pq}^y \alpha_y(x)+c_{pq}^x \alpha_x(x)\nonumber\\
&=&\alpha_p(q) \alpha_q(x)+\sum_{z\in M^T\backslash(\{q,x\}\cup V_x^{-})}c_{pq}^z \alpha_z(x) +\sum_{y\in V_x^{-}} c_{pq}^y \alpha_y(x)+c_{pq}^x \alpha_x(x). \nonumber
\end{eqnarray}The terms included in the second term on the right hand side can be non-zero only when $z\in V_q^{+}$ and $\sum_z^x$ is a non-empty set. Hence, by rearranging the terms, we get
\begin{equation}\label{equation23}
c_{pq}^x=\frac{\alpha_p(x)\alpha_q(x)-\alpha_p(q)\alpha_q(x)-\sum_{z\in V_q^{+}}c_{pq}^z \alpha_z(x)}{\alpha_x(x)} -\frac{\sum_{y\in V_x^{-}}c_{pq}^y \alpha_y(x)}{\alpha_x(x)}.
\end{equation}For $y\in V_x^{-}$, by (\ref{equation18}),
\begin{equation} \label{equation24}
\frac{\alpha_y(x)}{\alpha_x(x)}\mathfrak{s}_y^x c_{pq}^y=\frac{\alpha_p(x)\alpha_q(x)-\alpha_p(q)\alpha_q(x)-\sum_{z\in V_y^{-}}c_{pq}^z \alpha_z(x)}{\alpha_x(x)}.
\end{equation}The last term in the numerator on the right side of (\ref{equation24}) can be non-zero only when $z\in V_q^{+}$ and $\sum_z^x$ is non-empty. By (\ref{equation23}) and (\ref{equation24}), for each $y\in V_x^{-}$,
\begin{equation} \label{equation25}
c_{pq}^x=\frac{\alpha_y(x)}{\alpha_x(x)} \mathfrak{s}_y^x c_{pq}^y-\sum_{z\in V_q^+\backslash V_y^-}\frac{\alpha_z(x)}{\alpha_x(x)}c_{pq}^z - \sum_{y\in V_x^-}\frac{\alpha_y(x)}{\alpha_x(x)}c_{pq}^y.
\end{equation}Before adding up the equations (\ref{equation25}) for each $y\in V_x^-$, let us focus on the second term on the right side of (\ref{equation25}). Since we are only interested in those non-zero terms, we only have to take care of all the terms for those $z\in V_q^+$ when there is at least one path connecting $q$, $z$ and $x$. The simplest case is that $| V_z^+|=1$ for all $z\in V_q^+$. That is, $z$ is only connected to one and only one $y\in V_x^-$. In this case, the sets $V_y^-$ for each $y\in V_x^-$ are all disjoint. It implies that $V_q^+$ is a disjoint union of $V_y^-$ for each $y\in V_x^-$. Then by adding (\ref{equation25}) for all $y\in V_x^-$, we get
\begin{equation} \label{equation26}
|V_x^-| c_{pq}^x=\sum_{y\in V_x^-}\frac{\alpha_y(x)}{\alpha_x(x)}\mathfrak{s}_y^x c_{pq}^y-\sum_{y\in V_x^-} \sum_{z\in V_q^+\backslash V_y^-}\frac{\alpha_z(x)}{\alpha_x(x)}c_{pq}^z-| V_x^-| \sum_{y\in V_x^-}\frac{\alpha_y(x)}{\alpha_x(x)}c_{pq}^y.
\end{equation}For the second term on the right side, we have
\begin{eqnarray}
\sum_{y\in V_x^-} \sum_{z\in V_q^+\backslash V_y^-}\frac{\alpha_z(x)}{\alpha_x(x)}c_{pq}^z &=&
\sum_{y\in V_x^-}(\sum_{z\in V_q^+}\frac{\alpha_z(x)}{\alpha_x(x)}c_{pq}^z-\sum_{z\in V_y^-}\frac{\alpha_z(x)}{\alpha_x(x)}c_{pq}^z)\nonumber\\
&=&|V_x^-|\sum_{z\in V_q^+}\frac{\alpha_z(x)}{\alpha_x(x)}c_{pq}^z-\sum_{y\in V_x^-}\sum_{z\in V_y^-}\frac{\alpha_z(x)}{\alpha_x(x)}c_{pq}^z\nonumber\\
&=&|V_x^-|\sum_{z\in V_q^+}\frac{\alpha_z(x)}{\alpha_x(x)}c_{pq}^z-\sum_{z\in V_q^+}\frac{\alpha_z(x)}{\alpha_x(x)}c_{pq}^z\nonumber\\
&=& (|V_x^-|-1)\sum_{z\in V_q^+}\frac{\alpha_z(x)}{\alpha_x(x)}c_{pq}^z.\label{equation27}
\end{eqnarray}Substitute (\ref{equation27}) into (\ref{equation26}) to get
\begin{equation} \label{equation28}
| V_x^-| c_{pq}^x=\sum_{y\in V_x^-}\frac{\alpha_y(x)}{\alpha_x(x)}\mathfrak{s}_y^x c_{pq}^y+(1-| V_x^-|)\sum_{z\in V_q^+}\frac{\alpha_z(x)}{\alpha_x(x)}c_{pq}^z-| V_x^-| \sum_{y\in V_x^-}\frac{\alpha_y(x)}{\alpha_x(x)}c_{pq}^y.
\end{equation}Dividing (\ref{equation28}) by $| V_x^-|$, we get
\begin{equation}
c_{pq}^x=\sum_{y\in V_x^-}\frac{\alpha_y(x)}{\alpha_x(x)}(\frac{1}{|V_x^-|}\mathfrak{s}_y^x c_{pq}^y-c_{pq}^y)+\frac{1 -| V_x^-|}{|V_x^-|}\sum_{z\in M^T\backslash (\{q,x\}\cup V_x^-)}\frac{\alpha_z(x)}{\alpha_x(x)}c_{pq}^z.
\end{equation}which is our desired formula (when $|V_z^+|=1$ for all $z\in V_q^+$).

More generally, if $|V_z^+|>1$ for some $z\in V_q^+$, we have to take care of those `excessive edges' coming out of each $z\in V_q^+$. For each of these `excessive edges', we have an extra term $-\alpha_z(x)c_{pq}^z/ \alpha_x(x)$ in (\ref{equation27}). The number of these `excessive edges' for each $z\in V_q^+$ is $| V_z^+|-1$. It means that we have an extra term $-(|V_z^+|-1)\alpha_z(x)c_{pq}^z/ \alpha_x(x)$. Hence (\ref{equation27}) becomes
\begin{eqnarray}
\sum_{y\in V_x^-} \sum_{z\in V_q^+\backslash V_y^-}\frac{\alpha_z(x)}{\alpha_x(x)}c_{pq}^z 
&=& \sum_{z\in V_q^+}[(|V_x^-|-1)\frac{\alpha_z(x)}{\alpha_x(x)}c_{pq}^z-(|V_z^+|-1)\frac{\alpha_z(x)}{\alpha_x(x)}c_{pq}^z]\nonumber\\
&=& \sum_{z\in V_q^+}(|V_x^-|-|V_z^+|)\frac{\alpha_z(x)}{\alpha_x(x)}c_{pq}^z.\label{equation30}
\end{eqnarray}Substitute (\ref{equation30}) into (\ref{equation26}) and divide (\ref{equation26}) by $|V_x^-|$ to get the desired formula.
\end{proof}

\section{An example: Complex Projective Space} \label{section3}
A simple example for a compact Hamiltonian-$T$-space is $\mathbb{CP}^n$. The $T$-action is defined by $(t_0,...,t_n).[z_0,...,z_n]=[t_0 z_0,...,t_n z_n]$. The moment polytope is the $n$-simplex. By suitably choosing a generic component of the moment map, we get the Morse function. There are $n+1$ vertices in the moment graph. We label the vertices by $p_0, p_1,..., p_n$ in the ascending order of their indices. $|V_{p_i}^+|=|V_{p_i}^-|=1$ for all $i$ except $i=0$ and $i=n$. By Lemma 3.2 in \cite{Tymoczko}, the classes $\alpha_{p_i}$ defined by $\alpha_{p_i}(p_k)=\prod_{j=0}^{i-1}(t_j-t_k)$ for $i\leq k,i=1,...,n$ can be used as the set of canonical classes for $H_T^\ast(\mathbb{CP}^n)$. Thus, we have $\alpha_{p_{k-1}}(p_k)/ \alpha_{p_k}(p_k)=1/(t_{k-1}-t_k)$. By Theorem \ref{theoremz}, \ref{theoremy} and \ref{theoremx}, we have \begin{equation} \label{equation31}
c_{p_i p_j}^{p_k}=\frac{\mathfrak{s}_{p_{k-1}}^{p_k} c_{p_i p_j}^{p_{k-1}}-c_{p_i p_j}^{p_{k-1}}}{t_{k-1}-t_k}
\end{equation} when $\lambda(p_i)\leq\lambda(p_j)$ and $\lambda(p_k)-\lambda(p_j)=1, 2, 3$.

More generally, for $\lambda(p_k)-\lambda(p_j)>3$, it is straightforward to check that (\ref{equation31}) still holds by the localization method used in the proofs of Theorem \ref{theoremz}, \ref{theoremy} and \ref{theoremx}. Hence we have obtained Theorem 4.1 in \cite{Tymoczko} as a special case of our results. 

\begin{remark}
The right side of (\ref{equation31}) is the same as $\partial_{k-1}c_{p_i p_j}^{p_{k-1}}$ where $\partial_{k-1}$ is the \emph{divided difference operator} defined in \cite{Tymoczko}. Divided difference operators are also defined in Kasparov's equivariant $KK$-theory. For the definitions and some interesting applications of divided difference operators in $K$-theory and $KK$-theory, see \cite{Leung1} and \cite{Leung2}.
\end{remark}


\begin{thebibliography}{99}

\bibitem[1]{GoldinTolman} Goldin, R.;Tolman, S.:  Towards generalizing Schubert calculus in the symplectic category, J. Symplectic Geometry, Vol 7 No. 4, 449-473, 2009.

\bibitem[2]{GulleminZara} Guillemin, V.;Zara, C.: 1-skeleta, Betti numbers, and equivariant cohomology, Duke Math. J., 107, 283-349, 2001.

\bibitem[3]{Leung1} Leung, H-H.: $K$-theory of weight varieties, New York J. Math., {\bf{17}}, 251-267, 2011.

\bibitem[4]{Leung2} Leung, H-H.: $K$-theory of weight varieties and divided difference operators in equivariant $KK$-theory, PhD. Thesis, Cornell University, 2011.

\bibitem[5]{Tymoczko} Tymoczko, J.S.: Equivariant structure constants for ordinary and weighted projective space, arXiv:0806.3588v1 [math.AT].

\bibitem[6]{Zara} Zara, C.: Positivity of Equivariant Schubert Classes Through Moment Map Degeneration, J. Symplectic Geometry, to appear.

\end{thebibliography}
\end{document}